\newcommand{\CC}{{\mathbb{C}}}
\newcommand{\KK}{{\mathbb{K}}}
\newcommand{\QQ}{{\mathbb{Q}}}
\newcommand{\RR}{{\mathbb{R}}}
\newcommand{\ZZ}{{\mathbb{Z}}}
\newcommand{\fA}{{\mathfrak{A}}} 
\newcommand{\fS}{{\mathfrak{S}}} 
\newcommand{\Irr}{{\operatorname{Irr}}}
\newcommand{\PSL}{{\operatorname{L}}}
\newcommand{\PSU}{{\operatorname{U}}}
\newcommand{\PSp}{{\operatorname{S}}}
\let\vhi=\varphi
\newcommand{\GAP}{{\sf{GAP}}\xspace}
\newcommand{\polymake}{{\sf{polymake}}\xspace}
\newcommand{\polyDB}{{\sf{polyDB}}\xspace}
\newcommand{\OSCAR}{{\sf{OSCAR}}\xspace}
\newtheorem{thm}{Theorem}
\newtheorem{prop}[thm]{Proposition}
\newtheorem{lem}[thm]{Lemma}
\theoremstyle{definition}
\newtheorem{rem}[thm]{Remark}
\newtheorem{exmp}[thm]{Example}
\newtheorem{qst}[thm]{Question}
\begin{document}

\title{Zeros of $S$-characters}

\author{Thomas Breuer}
\address{Lehrstuhl f\"ur Algebra und Zahlentheorie, RWTH Aachen University,
  Pontdriesch 14/16, 52062 Aachen, Germany.}
\email{sam@math.rwth-aachen.de}

\author{Michael Joswig}
\address{Chair of Discrete Mathematics/Geometry, Technische Universit\"at
  Berlin, Germany.}
\email{joswig@math.tu-berlin.de}

\author{Gunter Malle}
\address{FB Mathematik, RPTU, Postfach 3049, 67653 Kaisers\-lautern, Germany.}
\email{malle@mathematik.uni-kl.de}

\begin{abstract}
The concept of $S$-characters of finite groups was introduced by Zhmud' as a
generalisation of transitive permutation characters. Any non-trivial
$S$-character takes a zero value on some group element. By a deep result
depending on the classification of finite simple groups a non-trivial transitive
permutation character even vanishes on some element of prime power order.
We present examples that this does not generalise to $S$-characters, thereby
answering a question posed by J-P.\ Serre.
\end{abstract}

\keywords{$S$-characters, zeros of characters, permutation characters, lattice points in polytopes}

\subjclass[2010]{%
20C15, 
20D08, 
20B05, 
11P21, 
52B20
}

\date{\today}

\maketitle

\pagestyle{myheadings}

\section{Introduction}
\label{sec:intro}

The concept of $S$-characters of finite groups was introduced by Zhmud' \cite{Zh}
as a far reaching generalisation of, for example, transitive permutation
characters. More precisely let $G$ be a finite group. Then an
\emph{$S$-character of $G$} is a virtual character $\psi$ of $G$ (i.e., an
integral linear combination of irreducible complex characters of $G$) such that
\begin{enumerate}
\item $\psi$ contains the trivial character $1_G$ exactly once, and
\item all values $\psi(g)$, $g\in G$, are non-negative real numbers.
\end{enumerate}
Examples of $S$-characters are, as mentioned before, the characters of transitive
permutation representations of $G$ which obviously satisfy the above conditions,
but also products $\psi=\chi\overline{\chi}$ for $\chi\in\Irr(G)$, where
$\overline{\chi}$ denotes the complex conjugate character of $\chi$.

Any $S$-character $\psi\ne 1_G$ vanishes on some group element \cite[Thm~1]{Zh};
this can be seen by an easy adaptation of Burnside's proof of the corresponding
statement for non-linear irreducible characters.

It was shown by Fein, Kantor and Schacher in a much cited paper \cite{FKS},
using the classification of finite simple groups, that non-trivial transitive
permutation characters even vanish on some element of prime power order. (See
also Giudici \cite{Giu} for a more complete proof of a stronger statement).
Building upon \cite{FKS} it was shown in \cite{MNO} that any non-linear
irreducible character $\chi$ of a finite group (and hence any $S$-character of
the form $\chi\overline{\chi}$) vanishes on some element of prime power order.
This naturally leads to the question, posed by J-P.~Serre \cite{Se}, whether
the conclusion continues to hold for $S$-characters.

We show that this is not the case, thereby also indicating that the conclusions
of \cite{FKS} and \cite{MNO} might be rather subtle.

\begin{thm}   \label{thm}
  The alternating group $\fA_8$ possesses a non-trivial ordinary $S$-character
  which does not vanish on any element of prime power order.
  This is the smallest example, in terms of number of conjugacy classes.
  In particular, no other group with at most 14 conjugacy classes shares that
  property.
\end{thm}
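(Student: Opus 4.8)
The plan is to split the statement into an existence part---that $\fA_8$ carries such an $S$-character---and an exhaustiveness part---that among all finite groups with at most $14$ conjugacy classes only $\fA_8$ does---and to phrase both as questions about lattice points in a polytope built from the ordinary character table.

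For a finite group $G$ with $\Irr(G)=\{\chi_1=1_G,\chi_2,\dots,\chi_k\}$, write a virtual character as $\psi=\sum_{i=1}^k a_i\chi_i$ with $a=(a_1,\dots,a_k)\in\ZZ^k$. The conditions defining an $S$-character, together with the desired non-vanishing, translate into: $a_1=[\psi,1_G]=1$; for every conjugacy class, $\psi(g)$ is a non-negative real; and $\psi(g)>0$ strictly whenever $g$ has prime-power order. Over $\RR$, after splitting any non-real character values into real and imaginary parts, these are finitely many linear equations and inequalities and thus define a polyhedron $P_G\subseteq\RR^k$, cut out with coefficients in the abelian number field generated by the character values. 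The key observation is that $P_G$ is \emph{bounded}: from $a_1=1$ and $\psi(g)\ge0$ one gets $\sum_{g\in G}\psi(g)=|G|\,[\psi,1_G]=|G|$, hence $0\le\psi(g)\le|G|$ for all $g$, and therefore
\[
  \sum_{i=1}^k a_i^2=[\psi,\psi]=\frac1{|G|}\sum_{g\in G}\psi(g)^2\le|G|.
\]
So $P_G$ is a genuine polytope, and ``$G$ has the property'' becomes the finite question whether $P_G\cap\ZZ^k$ contains a point lying on none of the hyperplanes $\{\psi(g)=0\}$ with $g$ of prime-power order. I would decide this by lattice-point enumeration with \polymake and \Normaliz, taking the character tables from \GAP.

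For $\fA_8$ I would run this enumeration and then simply display one suitable vector $a$. As $\fA_8$ has $14$ classes---ten of which (the identity, the two classes of involutions, the two of order $3$, the two of order $4$, the class of order $5$, and the two of order $7$) consist of elements of prime-power order, whereas the two classes of order $6$ and the two of order $15$ do not---the existence part then reduces to substituting $a$ into the character table of $\fA_8$ and verifying that the resulting $\psi$ is $\ge0$ on every class and $>0$ on those ten. One should not expect $a$ to be ``obvious'': by \cite{FKS} every transitive permutation character, and by \cite{MNO} every $S$-character $\chi\overline{\chi}$ with $\chi$ non-linear, already vanishes on an element of prime-power order, so none of the standard $S$-characters will do. For the exhaustiveness part I would invoke the classification of finite groups with at most $14$ conjugacy classes---so that only finitely many groups, all of bounded order, remain---and for each such $G\ne\fA_8$ certify that $P_G\cap\ZZ^k$ meets none of the required open conditions. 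In the great majority of cases already the real polyhedron $P_G$ is empty, or is contained entirely in one of the forbidden hyperplanes, which a linear-programming certificate detects at once; only a handful of cases should need a genuine lattice-point enumeration, which is finite thanks to the bound $\sum_i a_i^2\le|G|$.

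The main obstacle is the exhaustiveness, not the construction. One must rely on---and, for safety, partly re-verify---completeness of the list of groups with at most $14$ conjugacy classes; one must run the polytope computations over the relevant number field, since many of these tables are non-rational (that of $\fA_8$ already involves the irrationality $\sqrt{-7}$ arising from the splitting of the classes of order $7$); and one must keep each lattice-point enumeration within computational reach, which is precisely what the boundedness estimate guarantees in principle. The existence part, by contrast, is a single explicit finite search whose output is a hand-checkable character. A useful remark for organising the search is that the property passes to larger groups under inflation---if $N\trianglelefteq G$ and $G/N$ has it, then so does $G$, because an element of prime-power order of $G$ has image of prime-power order in $G/N$---so an example with the fewest conjugacy classes has no proper quotient with the property; this is vacuously true for the simple group $\fA_8$, which makes it a natural minimal candidate.
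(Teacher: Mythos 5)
Your proposal is correct and follows essentially the same route as the paper: translate the existence of such an $S$-character into a lattice-point problem for a bounded polyhedron cut out by the (real) character table, enumerate the lattice points computationally (the paper finds $3636$ of them for $\fA_8$ and exhibits the unique good one, with coefficient vector $(1,1,1,1,1,1,1,1,2,2,2,3,3,3)$), and settle minimality by running the same computation over the classified groups with at most $14$ conjugacy classes. Your self-contained boundedness argument via $[\psi,\psi]\le|G|$ replaces the paper's citation of Zhmud's lemma but proves the same fact, and your closing remark about inflation is the (correct) converse of the projection argument the paper uses in Section~\ref{sec:solv}.
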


Here, an $S$-character is called \emph{ordinary} if it is an actual character.
A more comprehensive list of further such examples is given in
Section~\ref{sec:ex}. All of these are close to simple groups.
In fact, we are not aware of any solvable group as in the theorem.
\medskip

In Section~\ref{sec:poly} we explain how our problem can be reduced to the
question of finding lattice points in polyhedra. In Section~\ref{sec:solv} we
prove that every ordinary $S$-character of a solvable group vanishes on some
element of prime power order. Finally, in Section~\ref{sec:ex}, we present our
computational results with our new computer algebra system
\OSCAR~\cite{OSCAR-book,OSCAR}.
These computations include a search among groups listed in the ATLAS of Finite
Groups~\cite{Atl}. The finite groups with up to 14 conjugacy classes have been
classified by Vera L\'opez, Vera L\'opez, and
Sangroniz~\cite{VLVL85,VLVL86,VS07}.

\section{Characters as lattice points in polytopes}   \label{sec:poly}
In this section we translate the question whether a given finite groups
admits interesting $S$-characters into the language of polyhedral geometry.

\subsection{$S$-character simplices}
Let $G$ be a finite group. The complex character table of $G$ is a square matrix
$X\in\CC^{n\times n}$ whose entries lie in some algebraic (in fact, abelian)
number field. Its rows correspond to the $n$ complex irreducible characters
of~$G$, and the columns contain their values on the $n$ conjugacy classes of
elements. Complex conjugation operates both on the rows and the columns of~$X$.
Summing the row orbits yields a real valued matrix $U\in\RR^{m\times n}$, where
$m$ is the number of orbits. By omitting duplicate columns we arrive at a matrix
$V\in\RR^{m\times m}$, where the rows correspond to the $m$ irreducible
$\RR$-characters of~$G$, and the columns correspond to the orbits of conjugacy
classes, of which there are also $m$ by Brauer's permutation lemma. The entries
of $V$ lie in an abelian number field~$\KK$ which is real and thus ordered.

We consider the convex polyhedron
\begin{equation}\label{eq:polyhedron}
  S(G)\ := \ \bigl\{ x\in \KK^m \mid x V\geq0 \text{ and } x_1=1 \bigr\} \enspace,
\end{equation}
whose lattice points bijectively correspond to the $S$-characters of $G$.
Here we assume that the trivial character corresponds to the first row of $X$
and hence of $V$. In this way, the condition $x_1=1$ accounts for property~(1)
in the definition of an $S$-character.

\begin{prop}   \label{prop:simplex}
  The polyhedron $S(G)$ is a simplex of dimension $m-1$.
  If $G\neq 1$ is non-trivial, then the homogenised origin $(1,0,\ldots,0)$ is an
  interior lattice point.
\end{prop}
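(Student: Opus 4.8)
The plan is to turn the inequality description \eqref{eq:polyhedron} into a vertex description by inverting the matrix~$V$, and then to check that the linear functional $x\mapsto x_1$ is strictly positive on the cone $\{x\mid xV\ge 0\}$. Granting this positivity, $S(G)$ is the transversal section of a \emph{simplicial} cone through its apex, hence a simplex; and the homogenised origin will be identified as the barycentre-like point all of whose barycentric coordinates are positive.

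First I would record that $V$ is invertible over $\KK$ (equivalently over $\RR$). The character table $X$ is invertible by the orthogonality relations; the $m\times n$ matrix $U$ obtained by summing the rows of $X$ over complex-conjugation orbits still has rank $m$, because $X$ is invertible and the orbits partition the rows. Moreover the columns of $U$ indexed by a pair of mutually inverse classes coincide, since each orbit-sum $\theta_k$ (the sum of the irreducible characters in the $k$-th orbit) is fixed by complex conjugation, hence real-valued. So $U$ has exactly $m$ distinct columns and they are linearly independent; $V$ keeps one column per inverse-pair of classes, so $V\in\RR^{m\times m}$ is invertible. Consequently the cone $C:=\{x\in\RR^m\mid xV\ge 0\}$ is the image of the nonnegative orthant under the isomorphism $y\mapsto yV^{-1}$, so $C=\mathrm{cone}(r_1,\dots,r_m)$ is simplicial, where $r_i$ is the $i$-th row of $V^{-1}$, and $S(G)=C\cap\{x_1=1\}$ (with coordinates restricted to $\KK^m$).

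The crux is to prove that $x_1>0$ for every $x\in C\setminus\{0\}$, equivalently that every entry of the first column of $V^{-1}$ is positive, since these entries are the values of $x\mapsto x_1$ on the extreme ray generators $r_i$. Here I would use the representation-theoretic meaning of the coordinates: for $x\in C$ the class function $\psi:=\sum_k x_k\theta_k$ is nonzero whenever $x\ne0$ (the $\theta_k$ form a basis of the real class functions), is real-valued, hence constant on each inverse pair of classes, and takes the value $(xV)_j\ge 0$ on the $j$-th class orbit; therefore $\sum_{g\in G}\psi(g)>0$, and since $\theta_1=1_G$ and $\langle\theta_k,1_G\rangle=\delta_{k1}$ we get $x_1=\langle\psi,1_G\rangle=\frac{1}{|G|}\sum_{g\in G}\psi(g)>0$. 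With this in hand, the hyperplane $\{x_1=1\}$ meets each ray $\RR_{\ge0}\,r_i$ in the single point $v_i:=r_i/(r_i)_1$, and a direct computation (writing a general $x\in C$ as $\sum_i y_i r_i$ with $y_i\ge0$ and setting $\mu_i:=y_i(r_i)_1$) yields $S(G)=\mathrm{conv}(v_1,\dots,v_m)$. The $v_i$ are nonzero scalar multiples of the linearly independent vectors $r_i$, hence linearly and a fortiori affinely independent, so $S(G)$ is a simplex of dimension $m-1$; all $v_i$ have coordinates in~$\KK$, so passing to $\KK^m$ changes nothing.

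For the last assertion, observe that $(1,0,\dots,0)$ lies in $S(G)$ because its image under $x\mapsto xV$ is the first row of $V$, which is the all-ones vector, and it is clearly a lattice point. In the coordinates $y=xV$ this point is $(1,\dots,1)$, so by the computation above its barycentric coordinates with respect to $v_1,\dots,v_m$ are $\mu_i=(r_i)_1=(V^{-1})_{i1}$; these are all positive by the crux, and they sum to $1$ since the first row of $V$ consists of $1$'s. Hence $(1,0,\dots,0)$ lies in the relative interior of the simplex, which for $G\ne1$ has positive dimension $m-1\ge1$. The one genuinely non-formal point is the positivity in the crux; the remainder is the standard fact that a hyperplane transversal to all rays of a simplicial cone cuts out a simplex, together with the bookkeeping showing that $V$ is square and invertible.
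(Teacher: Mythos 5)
Your proof is correct, and it differs from the paper's in one substantive way: where the paper cites Zhmud's Lemma~2 for the boundedness of $S(G)$ and then concludes that a bounded $(m-1)$-dimensional polyhedron cut out by $m$ inequalities is a simplex, you prove the decisive input yourself. Your observation that for any $x$ in the cone $\{x \mid xV\ge 0\}$ with $x\neq 0$ the class function $\psi=\sum_k x_k\theta_k$ is nonzero and nonnegative, whence $x_1=\langle\psi,1_G\rangle=\frac{1}{|G|}\sum_{g\in G}\psi(g)>0$, is exactly the content of that boundedness statement (it shows the recession cone $\{xV\ge 0,\ x_1=0\}$ is trivial) and makes the argument self-contained. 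You then work from the vertex description, reading off the vertices $v_i=r_i/(r_i)_1$ from the rows of $V^{-1}$ and certifying interiority of $e_1$ via its barycentric coordinates $(V^{-1})_{i1}$, which are positive and sum to $1$; the paper instead checks interiority directly from the facet description, noting that $e_1V$ equals the all-ones first row of $V$, so every inequality is strict at $e_1$ --- the same fact seen from the dual side. One cosmetic slip: the $\theta_k$ are linearly independent but do not form a basis of all real-valued class functions (for a non-real $\chi$, the function $i(\chi-\overline{\chi})$ is real-valued and outside their real span); since linear independence is all you actually use to get $\psi\neq 0$, nothing breaks.
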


\begin{proof}
  It follows from \cite[Lemma~2]{Zh} that the polyhedron $S(G)$ is bounded.
  Moreover, the matrix $V$ is regular, which is why $\dim S(G)=m-1$.
  Since $S(G)$ is described in terms of $m$ inequalities, it follows that $S(G)$
  is a simplex.

  We had assumed that the trivial character occurs first.
  Since the trivial character forms its own orbit under complex conjugation,
  the first row of the matrix $V$ is the all-ones vector.
  Let $v$ be one of the $m$ columns of $V$; this is a homogeneous inward pointing
  facet normal vector of the simplex $S(G)$.
  Computing the scalar product $\langle v, e_1\rangle=1$ shows that
  $e_1=(1,0,\ldots,0)$ strictly satisfies the corresponding inequality.
  This observation holds for all facets, and so $e_1$ is an interior (lattice)
  point.
\end{proof}

\begin{rem}
  Observe that $V$ is rational-valued (and hence integer-valued) if and only if
  all entries of $X$ lie in imaginary quadratic number fields.
\end{rem}

We call $S(G)$ the \emph{$S$-character simplex} of the finite group $G$.
The interior lattice point $e_1=(1,0,\ldots,0)$ corresponds to the trivial character.
Interestingly, the inequality description \eqref{eq:polyhedron} agrees with the
homogeneous encoding of convex polyhedra which is standard to most software
systems in polyhedral geometry, including \polymake \cite{polymake-lattice}.
In the following we call the $(m{-}1)\times m$-matrix obtained by skipping the
first row (with the trivial character) in $V$ the \emph{truncated table of real
irreducible characters} of $G$, and we denote it as $V'$.
When we identify the affine hyperplane $x_1=1$ in $\RR^m$ with $\RR^{m-1}$ by
dropping the first coordinate $x_1=1$, we obtain
\begin{equation}  \label{eq:ineq}
  S(G) \ = \ \bigl\{ x \in \RR^{m-1} \mid x (-V') \leq 1 \} \enspace .
\end{equation}

\begin{exmp} \label{exmp:S4}
  The character table
  \[
    X \ = \ 
    \begin{pmatrix}
      1 & 1 & 1 & 1 & 1 \\
      1 & 1 & 1 & -1 & -1 \\
      2 & 2 & -1 & 0 & 0 \\
      3 & -1 & 0 & 1 & -1 \\
      3 & -1 & 0 & -1 & 1
    \end{pmatrix}
  \]
  of the symmetric group $\fS_4$ is rational-valued.
  So we have $m=n=5$ and $V=X$.
  The $S$-character simplex $S(\fS_4)$ is 4-dimensional with an inequality
  representation like \eqref{eq:ineq}.
  A direct computation shows that the vertices of $S(\fS_4)$ are precisely the columns of the truncated table $V'$ of real irreducible characters.
\end{exmp}

The following result will show that the situation of Example~\ref{exmp:S4} is
typical. Yet we first recall some basic concepts from polyhedral geometry.
A \emph{lattice polytope} has vertices whose coordinates are integers.
If a lattice polytope $P\subset\RR^d$ satisfies $P=\{ x\in\RR^d\mid A x\leq 1 \}$
for some integral matrix $A$, then it is called \emph{reflexive}.
Equivalently, $P$ is reflexive if and only if its polar
$P^\vee := \{ y\in\RR^d \mid \langle x,y\rangle \leq 1 \text{ for all } x\in P\}$
is a lattice polytope, too; here $\langle \cdot,\cdot\rangle$ denotes the
Euclidean scalar product. In that case the origin is the only interior lattice
point of both, $P$ and $P^\vee$; this is easy to see, but is also mentioned,
e.g., in \cite[Thm~4.1.9(iii)]{Bat}. Batyrev showed that reflexive polytopes
correspond to Fano toric varieties \cite{Bat}, which occur in string theory.
The simplex $S(\fS_4)$ from Example~\ref{exmp:S4} occurs as
\enquote{v05-000000963} in the \polyDB \cite{polydb} version\footnote{\url{https://polydb.org/#collection=Polytopes.Lattice.Reflexive}} of the classification of all four-dimensional reflexive polytopes by Kreuzer and Skarke \cite{KS}.

\begin{prop}   \label{prop:rational}
  Suppose that $G\neq 1$ is non-trivial and all characters of $G$ are
  rational-valued. Then $S(G)$ is a reflexive and self-polar lattice simplex.
  In particular, $S(G)$ is the convex hull of the columns of the truncated table
  $V'$ of real irreducible characters.
\end{prop}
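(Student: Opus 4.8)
The plan is to determine the vertices of the simplex $S(G)$ explicitly as the columns of the truncated table $V'$; once this is done, all three claims of the proposition follow almost immediately.

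First I would observe that the hypothesis forces $G$ to be a rational group, so that complex conjugation acts trivially on the rows and on the columns of the character table. Hence $m=n$, the matrix $V$ coincides with the ordinary character table $X$, and all of its entries are rational algebraic integers, i.e.\ ordinary integers. In particular $V'$ is an integral matrix, so by \eqref{eq:ineq} the polytope $S(G)$ has an inequality description with an integral constraint matrix; and by Proposition~\ref{prop:simplex} it is a simplex of dimension $m-1$ with the origin as its unique interior lattice point.

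The heart of the argument is a scalar product computation for the columns $v_1,\dots,v_m$ of $V'$, based on the column orthogonality relations of the (now rational-valued) character table. From $\sum_{\chi\in\Irr(G)}\chi(g_i)\chi(g_j)=\delta_{ij}\,|C_G(g_i)|$, subtracting the contribution $1$ of the trivial character, one finds $\langle v_i,v_j\rangle=-1$ whenever $i\neq j$ and $\langle v_i,v_i\rangle=|C_G(g_i)|-1\geq 1$. Substituting $v_j$ into the $m$ facet inequalities $\langle x,-v_k\rangle\leq 1$ of \eqref{eq:ineq} then yields equality for every $k\neq j$ and a strict inequality for $k=j$, so $v_j$ lies in $S(G)$ and on exactly the $m-1$ facet hyperplanes indexed by $k\neq j$. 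Since $S(G)$ is an $(m-1)$-dimensional simplex, any $m-1$ of its $m$ facet hyperplanes meet in a single point --- the opposite vertex --- so $v_j$ must be that vertex; as the $v_j$ are pairwise distinct (again by the scalar product formula), letting $j$ range over $1,\dots,m$ produces all of them. Hence $S(G)=\operatorname{conv}(v_1,\dots,v_m)$, which is the last assertion of the proposition.

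It then remains to assemble the conclusions. The $v_j$ have integer coordinates, so $S(G)$ is a lattice polytope, and together with the integral inequality description from \eqref{eq:ineq} this makes it reflexive. For any polytope with the origin in its interior and inequality description $\{x\mid\langle x,a_k\rangle\leq 1\}$, the polar equals $\operatorname{conv}(a_1,\dots,a_N)$; here $a_k=-v_k$, so $S(G)^\vee=\operatorname{conv}(-v_1,\dots,-v_m)=-S(G)$, which is unimodularly equivalent to $S(G)$ via $x\mapsto -x$, i.e.\ $S(G)$ is self-polar. The only point that calls for a little care is the implication \enquote{$v_j$ lies on $m-1$ of the facet hyperplanes} $\Rightarrow$ \enquote{$v_j$ is a vertex}: this relies on the simplex property of $S(G)$ from Proposition~\ref{prop:simplex}, or equivalently on the fact --- readily checked from the Gram matrix $\bigl(\langle v_i,v_j\rangle\bigr)_{i,j}$ --- that any $m-1$ of the columns $v_k$ are linearly independent.
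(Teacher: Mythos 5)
Your proof is correct and follows essentially the same route as the paper's: both identify the vertices of $S(G)$ with the columns of $V'$ via the column orthogonality relations of the character table and then read off reflexivity and self-polarity. You argue from columns to vertices (showing each $v_j$ satisfies all facet inequalities and saturates exactly $m-1$ of them), whereas the paper argues from vertices to columns via a kernel argument, and your explicit handling of the sign in $S(G)^\vee=-S(G)$ is if anything slightly more careful than the paper's.
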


\begin{proof}
  As in \eqref{eq:ineq} we identify the affine hyperplane $x_1=1$ in $\RR^m$ with $\RR^{m-1}$ by dropping the first coordinate $x_1=1$.
  In this way the vector $e_1\in\RR^m$ corresponds to the origin in $\RR^{m-1}$.
  By Proposition~\ref{prop:simplex} the polytope $S(G)$ is a full-dimensional simplex in $\RR^{m-1}$ which contains the origin as an interior point.
  The homogeneous inward pointing normal vectors of the $m$ facets of $S(G)$ are given by the columns of the character table, which is integral.
  Each vertex, $v\in\RR^{m-1}$, of $S(G)$, is the intersection of precisely $m-1$ facets.
  That is, the homogenisation $(1,v)$ is contained in the kernel of the $m\times(m{-}1)$-matrix formed by those columns.
  Since any two columns of $X$ are orthogonal, it follows that $(1,v)$ and the remaining column, $s$, are linearly dependent.
  Let $g\in G$ be in the conjugacy class corresponding to the column $s$.
  Then $\chi_1(g)=s_1=1$, and thus $(1,v)=s$.
  It follows that the homogenised vertices of $S(G)$ agree with the homogenised inward pointing facet normal vectors.

  We conclude that $S(G)\subset\RR^{m-1}$ agrees with its own polar $S(G)^\vee$.
  Since the latter is a lattice polytope, $S(G)$ is reflexive.
\end{proof}

The next example shows that $S$-character simplices may have vertices with
non-integral coefficients.
\begin{exmp} \label{exmp:L27}
  In contrast to the situation in Example~\ref{exmp:S4} the character table
  \[
    X \ = \ 
    \begin{pmatrix}
      1 & 1 & 1 & 1 & 1 & 1 \\
      3 & -1 & 0 & 1 & \zeta+\zeta^2+\zeta^4 & \zeta^3+\zeta^5+\zeta^6 \\
      3 & -1 & 0 & 1 & \zeta^3+\zeta^5+\zeta^6 & \zeta+\zeta^2+\zeta^4 \\
      6 & 2 & 0 & 0 & -1 & -1 \\
      7 & -1 & 1 & -1 & 0 & 0 \\
      8 & 0 & -1 & 0 & 1 & 1
    \end{pmatrix}
  \]
  of the projective special linear group $\PSL_2(7)$ has non-real entries; here
  $\zeta$ is a primitive seventh root of unity, and
  $\zeta+\zeta^2+\zeta^4=(-1+\sqrt{-7})/2 \approx -0.5 + 1.32288i$.

  The second and third irreducible characters are complex conjugates; adding
  them we obtain the $5{\times}6$-matrix $U$, whose last two columns agree.
  Skipping one of them we arrive at the integral $5{\times}5$-matrix
  \[
    V \ = 
    \begin{pmatrix}
      1 & 1 & 1 & 1 & 1 \\
      6 & -2 & 0 & 2 & -1 \\
      6 & 2 & 0 & 0 & -1 \\
      7 & -1 & 1 & -1 & 0 \\
      8 & 0 & -1 & 0 & 1
    \end{pmatrix} \enspace .
  \]
  The vertices of the $S$-character simplex $S(\PSL_2(7))=\{ x\in\RR^4 \mid x (-V') \leq 1 \}$ read
  \[
    (3,6,7,8) \,,\ (-1,2,-1,0) \,,\ (0,0,1,-1) \,,\ (1,0,-1,0) \,,\ (-\tfrac{1}{2},-1,0,1)\enspace .
  \]
  In particular, $S(\PSL_2(7))$ is a not a lattice polytope.
  As in Example~\ref{exmp:S4}, the homogenised vertices correspond to the
  columns the matrix $V$, but the coefficients of the second row (which came
  from the sum of the two nonreal characters) are divided by two.
\end{exmp}

By generalising Example~\ref{exmp:L27} it is possible to obtain a vertex
description of $S(G)$ for an arbitrary finite group $G$.
From this it follows that the dilate $2\cdot S(G)$ is always a lattice polytope.
We leave the details to the reader.

\subsection{Enumerating lattice points}   \label{subsec:enum}
The task of finding relevant $S$-characters of a finite group~$G$ reduces to the following basic 3-step procedure.
First, we compute the vertices of $S(G)$ from the real character table $V$ via Cramer's rule.
Second, we enumerate all lattice points in $S(G)$.
Third, we check if the $S$-character corresponding to a given lattice point does not vanish on any element of prime power order.

Enumerating lattice points in polytopes is a topic which is both classical and
still a subject of ongoing research. For an introduction the reader is referred
to the text books by Beck--Robins \cite{BR} and Barvinok \cite{Bar}.
Two facts are decisive.
First, to detect if a polyhedron given in terms of linear inequalities contains
any lattice point, is known to be NP-complete; e.g., see \cite[Thm~18.1]{ILP}.
So there is little hope for efficient procedures.
Second, enumerating lattice points is also hard in practice; see \cite{polymake-lattice} for a computational survey, which compares several methods and their implementations.

Some of the more sophisticated methods leverage techniques from toric geometry.
A key tool is Barvinok's algorithm, which runs in polynomial time in any fixed dimension \cite[Chap.~17]{Bar}.
However, this procedure and similar ones are inherently restricted to the rational case $\KK=\QQ$.

One of the methods which works for arbitrary real number fields $\KK$ has been called \emph{project-and-lift}, and is implemented in \polymake.
We sketch the idea.
The input is a convex polytope $P$ with a double description, i.e., the list of its vertices plus the list of its facet defining inequalities (and possibly additional linear equations).
Now the project-and-lift algorithm for enumerating the lattice points in $P$ works recursively.
In the base case the polytope $P$ is one-dimensional.
Then the two vertices span a line segment; enumerating its lattice points is straightforward.
If $\dim P=d\geq 2$, then we can project out one of the coordinate directions to obtain the image $Q$ where $\dim Q=d-1$.
We can enumerate the lattice points in $Q$ by applying project-and-lift.
By tracing the (one-dimensional) fibres of the projection map we find the lattice points of $P$.
Despite the fact that lift-and-project is a very simple algorithm, it behaves rather well in practice \cite[Rule of Thumb 8]{polymake-lattice}.
Most importantly here, however, is the fact that it works for arbitrary real number fields.

\begin{rem}\label{rem:speedup}
  There are several ways to speed up the procedure sketched above.
  Observe that the value of the $S$-character $\psi$ on $g\in G$ agrees with the scalar product $\langle x, v\rangle$, where $x\in\ZZ^m$ is the vector of coefficients of $\psi$ expressed as a linear combination of irreducibles, and $v$ is the column of $V$ corresponding to the conjugacy class of $g$.
  That is to say, the $S$-character $\psi$ does not vanish on $g$ if and only if $\langle x,v\rangle>0$.
  For instance, if the character table $X$ of $G$ is rational valued, and we want to find $S$-characters not vanishing on $g$, then the condition $\langle x,v\rangle\geq 0$ in \eqref{eq:ineq} can be replaced by $\langle x,v\rangle\geq 1$ because $x$ and $v$ are integral.
\end{rem}

\section{Characters of solvable groups}   \label{sec:solv}

In this section we summarize basic aspects of character theory, with a focus on
studying $S$-characters of solvable groups. Let $G$ be a finite group,
$N\unlhd G$ a normal subgroup and set $F:= G/N$. For a class function
$\vhi$ of $G$ let $\vhi^F$ be the class function of $F$ defined by
\[
   \vhi^F(f) = \frac{|C_F(f)|}{|G|} \sum_{x \in R(f)} |x^G| \vhi(x)\qquad
   \text{for $f\in F$},
\]
where $R(f)$ is a set of representatives of $G$-classes that fuse to $f^F$
under the natural epimorphism. Then $\vhi^F$ is the projection of $\vhi$ to $F$;
here for a class function $\theta$ of $F$, let $\theta_G$ denote the inflation
of $\theta$ to $G$, that is, $\theta_G(g) = \theta(gN)$ for $g \in G$:

\begin{lem}   \label{lem:proj}
 Let $\chi\in\Irr(G)$. Then either $\chi^F\in \Irr(F)$ or $\chi^F$ is zero, and
 $$(\chi^F)_G=\begin{cases} \chi& \text{if $N\subseteq \ker(\chi)$},\\
  0& \text{otherwise}.\end{cases}$$
\end{lem}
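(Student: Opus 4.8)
The plan is to recognise $\chi\mapsto\chi^F$ as the adjoint of inflation and then to use that inflation identifies $\Irr(F)$ with the set of irreducible characters of $G$ having $N$ in their kernel. Throughout I write $\langle\alpha,\beta\rangle_G=\frac1{|G|}\sum_{g\in G}\alpha(g)\overline{\beta(g)}$ for the usual Hermitian inner product on class functions of $G$, and $\langle\cdot,\cdot\rangle_F$ for the analogous pairing on $F$.

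First I would establish the reciprocity formula
\[
  \langle\varphi^F,\theta\rangle_F \;=\; \langle\varphi,\theta_G\rangle_G
  \qquad\text{for all class functions $\varphi$ of $G$ and $\theta$ of $F$.}
\]
This is a direct bookkeeping computation: expand the left-hand side, group the sum over $F$ by $F$-classes so that each factor $|f^F|/|F|$ turns into $1/|C_F(f)|$, substitute the defining formula for $\varphi^F(f)$ (whose $1/|C_F(f)|$ then cancels), replace $\theta(f)$ by $\theta_G(x)$ for each $x\in R(f)$ (legitimate since $xN$ is $F$-conjugate to $f$), and observe that the pairs $(f^F,x)$ with $x\in R(f)$ run through all $G$-conjugacy classes exactly once. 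The same computation shows, as a by-product, that $\varphi^F$ is well defined as a class function of $F$ --- the right-hand side of its defining formula depends only on the $F$-class of $f$ --- and that $\varphi\mapsto\varphi^F$ is additive.

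Second I would recall the standard facts about inflation: $\theta\mapsto\theta_G$ is a linear isometry from the class functions of $F$ into those of $G$, it carries characters to characters and irreducible characters to irreducible characters, it is injective (one recovers $\theta$ from $\theta_G$ via $\theta(gN)=\theta_G(g)$), and its restriction to $\Irr(F)$ is a bijection onto $\{\chi\in\Irr(G)\mid N\subseteq\ker\chi\}$.

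Finally I would fix $\chi\in\Irr(G)$ and expand $\chi^F$, which lies in the space of class functions of $F$, in the orthonormal basis $\Irr(F)$: $\chi^F=\sum_{\psi\in\Irr(F)}\langle\chi^F,\psi\rangle_F\,\psi$. By the reciprocity formula the coefficient of $\psi$ is $\langle\chi,\psi_G\rangle_G$, and since $\psi_G\in\Irr(G)$ this equals $1$ if $\chi=\psi_G$ and $0$ otherwise; by injectivity of inflation at most one $\psi$ satisfies $\chi=\psi_G$. Hence $\chi^F$ is either $0$, precisely when $\chi$ is not inflated from $F$, i.e.\ $N\not\subseteq\ker\chi$, or the unique $\psi\in\Irr(F)$ with $\psi_G=\chi$, precisely when $N\subseteq\ker\chi$; applying inflation gives $(\chi^F)_G=0$ in the first case and $(\chi^F)_G=\psi_G=\chi$ in the second, which is the assertion. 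The only genuine effort is the index juggling in the reciprocity identity; the rest is formal once that identity is available.
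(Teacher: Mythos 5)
Your proposal is correct and follows essentially the same route as the paper: both establish the adjointness identity $\langle\varphi^F,\theta\rangle_F=\langle\varphi,\theta_G\rangle_G$ by the same direct class-by-class computation and then expand $\chi^F$ over $\Irr(F)$, using that inflation identifies $\Irr(F)$ with the irreducibles of $G$ containing $N$ in their kernel. The only difference is that you spell out the final bookkeeping (injectivity of inflation, matching the two cases) a bit more explicitly than the paper does.
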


\begin{proof}
For $\psi$ a class function on $F$ and $\vhi$ a class function on $G$ we have
\begin{eqnarray*}
  [\vhi^F, \overline{\psi}]_F & = &
  \frac{1}{|F|} \sum_{f \in F/\sim} |f^F| \vhi^F(f) \psi(f) \\
  & = & \frac{1}{|F|} \sum_{f \in F/\sim}
     |f^F| \frac{|C_F(f)|}{|G|} \sum_{x \in R(f)} |x^G| \vhi(x) \psi(f) \\
  & = & \frac{1}{|G|} \sum_{f \in F/\sim} \sum_{x \in R(f)}
     |x^G| \vhi(x) \psi_G(x) \\
  & = & \frac{1}{|G|} \sum_{y \in G/\sim} |y^G| \vhi(y) \psi_G(y) \\
  & = & [\vhi, \overline{\psi_G}]_G.
\end{eqnarray*}
Thus, letting $\psi$ run over $\Irr(F)$ we see that $\vhi^F$ is either a
character of~$F$ or zero, if $\vhi$ is a character of $G$. To conclude note that
the inflations $\psi_G$ are exactly those irreducibles of $G$ whose kernel
contains $N$.
\end{proof}

If $\psi$ is an $S$-character of $G$ then $\psi^F$ is an $S$-character of $F$,
because the trivial character of $F$ has multiplicity one in $\psi^F$, and
$\psi^F$ is real and non-negative because its values are sums of real and
non-negative numbers, by definition.

Moreover, if $\psi$ is nonzero on all elements of prime power order in $G$ then
also $\psi^F$ is nonzero on all elements of prime power order in $F$. For that,
note that for an element $g\in F$ whose order is a power of the prime $p$,
the natural epimorphism $\pi:G\to F$ sends any Sylow $p$-subgroup of
$\pi^{-1}(\langle g\rangle)$ to the Sylow $p$-subgroup of $\langle g\rangle$ and
thus the preimage of $g$ under $\pi$ contains an element of prime power order.

Recall that an $S$-character is called \emph{ordinary} if it is an actual
character.

\begin{prop}\label{prop:solvnotord}
 Each non-trivial ordinary $S$-character of a finite solvable group vanishes on
 some element of prime power order.
\end{prop}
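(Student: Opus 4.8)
The plan is to argue by induction on $|G|$ (inducting on the number of conjugacy classes works verbatim) and to contradict a minimal counterexample. So I assume that $G$ is solvable, that $\psi = 1_G + \sum_{\chi\neq 1_G} a_\chi\chi$ is an ordinary $S$-character of $G$ with integers $a_\chi\geq 0$ not all zero, so that $\psi(1)\geq 2$, and that $\psi(g)>0$ for every element $g$ of prime power order, while no solvable group smaller than $G$ admits such a character.

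The first step is the standard reduction that, in effect, removes the ``non-faithful part'' of $\psi$. Pick a minimal normal subgroup $N\unlhd G$; since $G$ is solvable, $N$ is an elementary abelian $p$-group for some prime $p$, so every element of $N$ has prime power order. If $\psi^{G/N}\neq 1_{G/N}$, then $N\neq G$ and, by the remarks following Lemma~\ref{lem:proj}, $\psi^{G/N}$ is a non-trivial ordinary $S$-character of the strictly smaller solvable group $G/N$ which is still non-zero on all elements of prime power order, contradicting minimality. Hence $\psi^{G/N}=1_{G/N}$. By Lemma~\ref{lem:proj} the characters $\chi^{G/N}$ with $N\subseteq\ker\chi$ are pairwise distinct irreducibles of $G/N$ (they inflate back to the distinct $\chi$) while all other $\chi^{G/N}$ vanish; since every $a_\chi\geq 0$ there is no cancellation in $\sum_{\chi\neq 1_G} a_\chi\chi^{G/N}=0$, so $N\not\subseteq\ker\chi$ for every non-trivial constituent $\chi$ of $\psi$.

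Now comes the heart of the proof. Since $N\unlhd G$, the permutation character $1_N^G$ is the inflation to $G$ of the regular character of $G/N$, so $N$ lies in the kernel of each of its irreducible constituents; by the previous step $\chi$ is therefore not a constituent of $1_N^G$ for any non-trivial constituent $\chi$ of $\psi$, i.e.\ $[\chi|_N,1_N]_N=0$. Only the summand $1_G$ contributes to $[\psi|_N,1_N]_N$, whence
\[
  \sum_{x\in N}\psi(x) \;=\; |N|\cdot[\psi|_N,1_N]_N \;=\; |N| \,,
\qquad\text{so}\qquad
  \sum_{x\in N\setminus\{1\}}\psi(x) \;=\; |N|-\psi(1) \;\le\; |N|-2 \,.
\]
On the other hand each of the $|N|-1$ numbers $\psi(x)$ with $x\in N\setminus\{1\}$ is strictly positive, because $x$ has prime power order. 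For $\sigma=\sigma_k\in\operatorname{Gal}(\QQ(\zeta_{|N|})/\QQ)$ (so $\gcd(k,p)=1$) one has $\sigma(\psi(x))=\psi(x^k)$, and $x\mapsto x^k$ permutes $N\setminus\{1\}$; hence the multiset $\{\psi(x):x\in N\setminus\{1\}\}$ is Galois invariant, so $\pi:=\prod_{x\in N\setminus\{1\}}\psi(x)$ is a rational algebraic integer, thus an integer, and being positive it satisfies $\pi\geq 1$. By the inequality of arithmetic and geometric means, $\sum_{x\in N\setminus\{1\}}\psi(x)\geq (|N|-1)\,\pi^{1/(|N|-1)}\geq |N|-1$, contradicting the displayed bound. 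This contradiction completes the induction.

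I expect the last paragraph to be the real obstacle: the counting identity on its own only yields $\psi(1)\leq |N|$, which is no contradiction at all, so one genuinely has to use that \emph{all} $|N|-1$ values of $\psi$ on $N\setminus\{1\}$ are positive simultaneously and to promote their product to a bona fide positive integer; as $S$-characters need not be rational-valued, the Galois-invariance observation cannot be skipped. The reduction to the ``faithful'' case is routine given Lemma~\ref{lem:proj} and the behaviour of projections recorded before the proposition, and the only place solvability enters is in guaranteeing the abelian minimal normal subgroup $N$ — without it the argument collapses, as it must in view of the paper's main theorem on $\fA_8$.
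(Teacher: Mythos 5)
Your proof is correct, and it shares the paper's first reduction (minimal counterexample, elementary abelian normal subgroup $N$, projection to $G/N$ trivial by minimality, hence $N\not\subseteq\ker\chi$ for every non-trivial constituent $\chi$), but the endgame is genuinely different. The paper simply observes, via Clifford's theorem, that $[\chi|_N,1_N]=0$ for those $\chi$, so that $\psi|_N$ is again an ordinary $S$-character of the strictly smaller solvable group $N$; since every element of the elementary abelian group $N$ has prime power order, minimality forces $\psi|_N=1_N$, hence $\psi(1)=1$ and $\psi=1_G$. You instead prove the needed fact about $N$ by hand: from $[\psi|_N,1_N]=1$ you get $\sum_{x\in N}\psi(x)=|N|$, hence $\sum_{x\neq 1}\psi(x)\leq|N|-2$, and then you use Galois invariance of the multiset $\{\psi(x):x\in N\setminus\{1\}\}$ to make the product a positive rational algebraic integer, so that AM--GM yields $\sum_{x\neq 1}\psi(x)\geq|N|-1$, a contradiction. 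Your route costs an extra (correct, but avoidable) arithmetic argument to handle possible irrationality of the values; the paper's second appeal to minimality sidesteps this entirely and also explains why it suffices to quote Zhmud's theorem only once, at the start. A small bonus of your version is that it treats the case $N=G$ uniformly and makes the contradiction quantitative, effectively proving the elementary abelian base case inline rather than delegating it to the induction hypothesis.
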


\begin{proof}
Let $G$ be a counterexample of minimal order, and let $\psi$ be a non-trivial
ordinary $S$-character of $G$ that is nonzero on all elements of prime power
order. Since $\psi$ vanishes at some element, $G$ is not elementary abelian.

Let $N$ be a non-trivial elementary abelian (and hence proper) normal subgroup
of~$G$, and set $F = G/N$. By Lemma~\ref{lem:proj} and the subsequent remark,
$\psi^F$ is an $S$-character of~$F$ that is nonzero on all elements of prime
power order. By the minimality of~$G$, $\psi^F$ is the trivial character.

This means that $\psi$ consists of the trivial character plus irreducible
constituents $\chi$ with the property $N \not\subseteq \ker(\chi)$.
By the theorem of Clifford, the restriction of any such $\chi$ to $N$ does not
have a trivial constituent, which implies that the restriction of $\psi$ to $N$
is an $S$-character of $N$ that is nonzero on all elements of prime power order.
As above, we conclude that this restriction is the trivial character.
Since $\psi$ is an ordinary character with a trivial constituent,
$\psi$ itself is trivial.
\end{proof}

\section{Results}   \label{sec:ex}

We now discuss our computations and the examples found.
Each of these examples gives a negative answer to Serre's question \cite{Se}.
While all of them were found in a computer search with
\OSCAR~\cite{OSCAR-book,OSCAR}, in the small cases the output can easily be
verified by hand.

\begin{exmp}   \label{xplJ1A8}
We discuss the group $G = \fA_8$. It has $14$ conjugacy classes, the degrees of
its absolutely irreducible characters are
$1, 7, 14, 20, 21, 21, 21, 28, 35, 45, 45, 56, 64, 70$.
There are two pairs of complex conjugate irreducibles, which yields a search
space of about $10^{20}$ real virtual characters with the property that the
absolute values of their coefficients are bounded by the degrees.
The $S$-character simplex $S(\fA_8)$ contains exactly $3636$ lattice points,
and just one of them belongs to an $S$-character (actually an ordinary
character) $\psi$ that is nonzero on all prime power order elements. We have
$$\psi =\sum_{i=1}^{14}a_i\,\chi_i\qquad
  \text{with $(a_i)=(1, 1, 1, 1, 1, 1, 1, 1, 2, 2, 2, 3, 3, 3)$}$$
and $\psi$ takes values
$$\begin{array}{cccccccccccccccccccc}
   1a& 2a& 2b& 3a& 3b& 4a& 4b& 5a& 6a& 6b& 7a& 7b& 15a& 15b\\
\hline
  953& 9& 1& 5& 2& 1& 1& 3& 1& 0& 1& 1& 0& 0\\
\end{array}$$
on the 14 conjugacy classes of $\fA_8$.
\end{exmp}

The above example proves one half of Theorem~\ref{thm}. The other half of the
claim excludes all other finite groups with at most 14 conjugacy classes.
That proof rests on a comprehensive computation with \OSCAR, which is described
in Section~\ref{subsec:enum}. Recall that the finite groups with up to 14
conjugacy classes have been classified~\cite{VLVL85,VLVL86,VS07}.
\OSCAR is particularly well suited for such
a computation because it features tools for group and representation theory
(inherited from \GAP~\cite{GAP}), polyhedral geometry (from
\polymake~\cite{polymake-lattice}) as well as number theory. Number theory is
relevant because we employ polyhedral geometry over real number fields.

Natural candidates to check for examples are the groups whose character tables
are contained in the library of character tables, which is available in {\OSCAR}
via~\cite{GAP}.
Table~\ref{tab:ex} contains a survey of the number of $S$-characters of finite
groups not vanishing on any prime power element that we found. The columns of
this table show a name of the group, the numbers of all irreducibles, of
$\RR$-irreducibles and of $\QQ$-irreducibles, then the number of those
non-trivial $S$-characters that are faithful and nonzero on all classes of prime
power order elements, and the number of the subset of those $S$-characters that
are not ordinary characters.
For instance, there are two faithful examples for the double cover
$2.\fA_8$ of the alternating group $\fA_8$, one of them ordinary,
and the projection of each of them to $\fA_8$ is the unique example
for that group.
All known examples are in fact rational.

We already mentioned that
$\fA_8$ is the only finite group with at most $14$ conjugacy classes
for which a (not necessarily rational) example exists.  Additionally,
we verified that $\fA_8$ is also the group of smallest order with this
property whose character table occurs in the ATLAS of Finite Groups.

The multiplicities of irreducible constituents can be quite large,
the multiplicity $2236$ occurs in one of the examples for the group $M^cL$.

Several ATLAS groups do not contribute examples.
We checked all ATLAS groups with at most $40$ conjugacy classes,
a few extensions of these groups such as $2.\fA_{11}$ and $2.HS$
(which have more conjugacy classes),
and the library character tables of perfect groups with at most $40$
conjugacy classes.
The enumeration of the lattice points did not finish for the groups
$HS.2$, $O_8^-(2)$, ${}^3D_4(2)$, $G_2(4)$, $G_2(4).2$, $McL.2$, $He$,
$Ru$, and $O'N$.
For example, using \OSCAR~1.3.0-DEV with Julia~1.10.7 on an 8-core
Intel Core i7-11700K running Debian Linux, with 128G main memory,
the computation for $O'N$ ran out of memory after more than three hours.
For the group ${}^2G_2(27)$, the computation of solutions for the
\emph{rational} character table was successful but that for the
\emph{real} character table did not finish.

In the computations, it is important to force the positivity of the solutions at
the classes of prime power order elements in advance where possible (see
Remark~\ref{rem:speedup}); for
rational columns of the character table, this can be done by prescribing a value
$\geq 1$. For example, enumerating the lattice points in the $S$-simplex of the
group $\PSU_4(3)$ requires an increasing amount of space, but we quickly get an
empty set of solutions if we search a priori only for lattice points
corresponding to $S$-characters which are nonzero on elements of prime power
order.

\section{Concluding remarks}

The world of $S$-characters of finite groups seems to be a rich topic, at the
intersection of representation theory, polyhedral geometry and number theory.
While we found some interesting examples, we could not answer the questions
below.
\begin{qst}
  Are there non-rational $S$-characters that do not vanish on an element of
  prime power order?
\end{qst}

We do not have an example for a solvable group.
\begin{qst}
  Are there (necessarily not ordinary) non-trivial $S$-characters of solvable
  groups that do not vanish on any element of prime power order?
\end{qst}
For those solvable groups of order at most $1535$ that have at most $40$
$\QQ$-irreducibles, we checked that there is no rational example.
When we search for a solvable example of minimal order, Lemma~\ref{lem:proj}
yields that we have to look only for an $S$-character such that all non-trivial
constituents are faithful, in particular the centre of the group must be cyclic;
by the proof of Proposition~\ref{prop:solvnotord}, also the commutator factor
group must be cyclic.

\begin{qst}
  For a non-trivial $S$-character $\psi$ that does not vanish on any element
  of prime power order, can it happen that the projection $\psi^F$ to a
  non-trivial factor group $F$ is the trivial character of $F$?
\end{qst}


\appendix

\begin{table}
\caption{Examples}   \label{tab:ex}
\(
\begin{array}{lrrrrr}
  \toprule
  G & \# \text{classes} & \# \text{real} & \# \text{rat.} & \# \text{S-char.}
  & \# \text{virt.\ S-char.} \\
  \midrule
  \fA_{8} & 14 & 12 & 12 & 1 & 0 \\
  2.\fA_{8} & 23 & 18 & 18 & 2 & 1 \\
  \fA_{9} & 18 & 17 & 17 & 3 & 0 \\
  2.\fA_{9} & 30 & 26 & 26 & 10 & 7 \\
  \fA_{10} & 24 & 24 & 23 & 1 & 0 \\
  2.\fA_{10} & 39 & 38 & 35 & 3 & 2 \\
  \fA_{11} & 31 & 30 & 29 & 6 & 0 \\
  2.\fA_{11} & 49 & 47 & 43 & 53 & 26 \\%
  [1.6ex]
  M_{12} & 15 & 14 & 14 & 1 & 0 \\
  M_{12}.2 & 21 & 21 & 19 & 1 & 0 \\
  J_{1} & 15 & 15 & 10 & 1 & 0 \\
  J_{2} & 21 & 21 & 16 & 155 & 8 \\
  2.J_{2} & 38 & 37 & 28 & 2571 & 1280 \\
  J_{2}.2 & 27 & 27 & 26 & 12 & 1 \\
  J_{3} & 21 & 20 & 14 & 5 & 0 \\
  M^{c}L & 24 & 18 & 18 & 2588 & 65 \\
  HS & 24 & 22 & 22 & 93 & 2 \\
  2.HS & 42 & 34 & 33 & 2211 & 1094 \\
  M_{24} & 26 & 21 & 21 & 223 & 4 \\%
  [1.6ex]
  \PSL_{4}(3) & 29 & 26 & 25 & 6 & 0 \\
  \PSL_{5}(2) & 27 & 20 & 18 & 11 & 0 \\
  \PSL_{5}(2).2 & 33 & 33 & 28 & 3 & 0 \\
  \PSp_{4}(4) & 27 & 27 & 18 & 133 & 10 \\
  \PSp_{4}(4).2 & 30 & 29 & 28 & 5 & 1 \\
  \PSp_{4}(5) & 34 & 34 & 25 & 76 & 11 \\
  \PSp_{6}(2) & 30 & 30 & 30 & 1 & 1 \\
  \PSU_{4}(2) & 20 & 15 & 15 & 2 & 1 \\
  2.\PSU_{4}(2) & 34 & 24 & 24 & 1 & 1 \\
  {}^{2}F_{4}(2)' & 22 & 19 & 16 & 71 & 2 \\
  {}^{2}F_{4}(2)'.2 & 29 & 23 & 23 & 7 & 3 \\
  ^2G_2(27) & 35 & 32 & 13 & 9 & 0 \\
  G_{2}(3) & 23 & 22 & 21 & 24 & 0 \\%
  [1.6ex]
  2^{4}\!:\!\fA_{8} & 25 & 22 & 22 & 2 & 0 \\
  2^{4}.\fA_{8} & 25 & 22 & 22 & 2 & 0 \\
  2^{5}.\PSL_{5}(2) & 41 & 29 & 27 & 2696 & 118 \\
  2^{6}\!:\!\fA_{8} & 41 & 39 & 39 & 8 & 1 \\
  \bottomrule
\end{array}
\)
\end{table}

\end{document}